\newtheorem{theorem}{Theorem}[section]
\newtheorem{lemma}[theorem]{Lemma}
\theoremstyle{definition}
\def\bb1{{1\hspace*{-2.4pt}\rm{l}}}
\def\beq{\begin{equation}}
\def\eeq{\end{equation}}
\numberwithin{equation}{section}
\newcommand{\R}{{\mathbb R}}
\newcommand{\Z}{{\mathbb Z}}
\newcommand{\C}{{\mathbb C}}
\newcommand{\Fl}{{\mathfrak f}}
\begin{document}

\vspace{0.5cm}

\title[A Beals criterion for magnetic P.D.O proved with magnetic Gabor frames]{A Beals criterion for magnetic pseudodifferential operators proved with magnetic Gabor frames}

\author[H.D.~Cornean]{Horia D. Cornean}
\address[H.D.~Cornean]{Department of Mathematical Sciences, Aalborg University \\ Skjernvej 4A, 9220 Aalborg, Denmark.}
\email{cornean@math.aau.dk}

\author[B.~Helffer]{Bernard Helffer}
\address[B.~Helffer]{Laboratoire
de Math\'ematiques Jean Leray, Universit\'{e} de Nantes and CNRS, 2 rue de la Houssini\`ere 44322 Nantex Cedex (France) and Laboratoire de Math\'ematiques d'Orsay, Univ.
Paris-Sud, Universit\'e Paris-Saclay, France.}
\email{bernard.helffer@univ-nantes.fr}

\author[R.~Purice]{Radu Purice}
\address[R.~Purice]{Institute
of Mathematics Simion Stoilow of the Romanian Academy \& Centre Francophone en Math\'{e}matiques de Bucarest, P.O.  Box
1-764, Bucharest, RO-014700, Romania.}
\email{radu.purice@imar.ro}

\begin{abstract}
 
 First, we give a new proof for the Beals commutator criterion for non-magnetic Weyl pseudo-differential operators based on classical Gabor tight frames. Second, by introducing a modified 'magnetic' Gabor tight frame, we naturally derive the magnetic analogue of the Beals criterion originally considered by Iftimie-M{\u a}ntoiu-Purice.
\end{abstract}

\maketitle


\section{Introduction and main results}
\subsection{Introduction}

 The  Beals criterion \cite{B} naturally characterizes pseudo-differential operators  by their commutation properties with fundamental objects like  multiplication and differentiation operators; the basics of Weyl pseudo-differential calculus can be found in e.g. \cite{H-3}. To the best of our knowledge, all existent proofs of Beals' criterion use in an essential way some special properties of the Fourier transform and the translation invariance of the seminorms in $\mathscr{S}(\R^{2d})$, see for example Lemma 2.2 in \cite{Bo} or Proposition 8.2 in \cite{Di-Sj}. \\
In recent years it appeared useful to introduce a magnetic pseudo-differential calculus (see \cite{IMP1,IMP2,IP1} and references therein) which is adapted to the presence of long-range magnetic fields. The main motivation behind this particular class of operators was the need of  highlighting the magnetic flux effects and building up a gauge covariant calculus. Therefore, it was natural to search for a magnetic Beals-like criterion where the commutation with the plain momentum operators 
 should be replaced by the commutation with their magnetic counterparts. Such a criterion was indeed proved by Iftimie-M{\u a}ntoiu-Purice \cite{IMP2} and one of the technically heavy points in that work was the extension of Bony's lemma to the magnetic case. \\
 The main motivation of our paper is to propose an alternative proof of Beals' classical criterion based on the use of a normalized tight Gabor frame and to show how this approach can be  quite naturally extended to the magnetic case and recover the criterion established in \cite{IMP2}. Note that  no a-priori knowledge of the magnetic calculus is needed in order to understand the current manuscript.

\subsection{The non-magnetic case}

Let $X_j$ be the multiplication operator by $x_j$, $1\leq j\leq d$, while $D_j:=-i\partial_{x_j}$.  We introduce $W_k:=X_k$ when $1\leq k\leq d$ and $W_k:=D_{k-d}$ when $d+1\leq k\leq 2d$. The operators $W_k$ leave the Schwartz space $\mathscr{S}(\R^d)$ invariant. Let us consider a bounded map $T:\mathscr{S}(\R^d)\mapsto \mathscr{S}'(\R^d)$. 
Seen as maps from $\mathscr{S}(\R^d)$ to $\mathscr{S}'(\R^d)$, the following multiple commutators 
\begin{align}\label{1}
[W_{j_1},[W_{j_2},...,[W_{j_m},T]]...]\,,\quad m\geq 1, \quad  j_\ell \in\{1,2,..., 2d\}, 
\end{align}
are also bounded. Then the classical Beals criterion \cite{B} reads as follows:

\begin{theorem}\label{T-Beals}
Let us assume that both $T$ and all  possible commutators as in \eqref{1} can be extended to bounded operators on $L^2(\R^d)$. Then there exists a symbol $a_0(x,\xi)\in S^{0}_{0,0}(\R^{2d})$ such that for every $\Psi,\Phi\in \mathscr{S}(\R^d)$ we have:
$$\langle \Psi,T\Phi\rangle_{L^2(\mathbb{R}^d)}=(2\pi)^{-d}\int_{\R^{d}}\left (\int_{\R^{2d}} e^{i\xi \cdot (x-x')}\overline{\Psi(x)}a_0((x+x')/2,\xi)\Phi(x')dxdx' \right )d\xi, $$
 i.e. $T={\rm Op}^w(a_0)$ is the Weyl quantization of $a_0$.
\end{theorem}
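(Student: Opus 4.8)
The plan is to diagonalize $T$ approximately in a fixed normalized tight Gabor frame and to read the symbol $a_0$ directly off the resulting infinite matrix. I fix a window $g\in\mathscr{S}(\R^d)$ with $\|g\|_{L^2}=1$ and a phase-space lattice $\Lambda\subset\R^{2d}$ such that the family $g_\lambda:=\pi(\lambda)g$, $\lambda\in\Lambda$, is a normalized tight frame of $L^2(\R^d)$, where $\pi(\lambda)$ denotes phase-space translation (a spatial translation by $q$ composed with a modulation by $p$, for $\lambda=(q,p)$). The existence of such a frame is the only external ingredient and involves no pseudodifferential input. I shall use two elementary facts. First, the reconstruction formula $f=\sum_{\lambda\in\Lambda}\langle g_\lambda,f\rangle g_\lambda$ on $\mathscr{S}(\R^d)$, which after a double expansion gives
\begin{equation*}
\langle\Psi,T\Phi\rangle=\sum_{\lambda,\mu\in\Lambda}F(\lambda,\mu)\,\langle\Psi,g_\lambda\rangle\,\langle g_\mu,\Phi\rangle,\qquad F(\lambda,\mu):=\langle g_\lambda,Tg_\mu\rangle,
\end{equation*}
i.e. $T=\sum_{\lambda,\mu}F(\lambda,\mu)\,|g_\lambda\rangle\langle g_\mu|$ weakly on $\mathscr{S}(\R^d)$. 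Second, the intertwining identities $(W_k-\lambda_k)\pi(\lambda)=\pi(\lambda)W_k$ on $\mathscr{S}(\R^d)$, $1\le k\le 2d$ (namely $(X_k-q_k)\pi(\lambda)=\pi(\lambda)X_k$ for $k\le d$ and $(D_{k-d}-p_{k-d})\pi(\lambda)=\pi(\lambda)D_{k-d}$ for $k>d$).

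The heart of the matter is the off-diagonal decay
\begin{equation*}
|F(\lambda,\mu)|\le C_N\,(1+|\lambda-\mu|)^{-N}\qquad\text{for every }N\ge 0\text{ and all }\lambda,\mu\in\Lambda.
\end{equation*}
To prove it, for $u,v\in\mathscr{S}(\R^d)$ and an operator $S\colon\mathscr{S}(\R^d)\to\mathscr{S}'(\R^d)$ put $b_S(u,v;\lambda,\mu):=\langle\pi(\lambda)u,S\pi(\mu)v\rangle$. Using the symmetry of $W_k$ on $\mathscr{S}(\R^d)$ together with the intertwining identities one obtains the exact recursion
\begin{equation*}
(\lambda_k-\mu_k)\,b_S(u,v;\lambda,\mu)=b_{[W_k,S]}(u,v;\lambda,\mu)-b_S(W_ku,v;\lambda,\mu)+b_S(u,W_kv;\lambda,\mu).
\end{equation*}
Iterating this $|\beta|$ times starting from $S=T$ and $u=v=g$, the quantity $(\lambda-\mu)^\beta F(\lambda,\mu)$ becomes a finite linear combination of at most $3^{|\beta|}$ terms $b_{S'}(u',v';\lambda,\mu)$, in which $S'$ is an iterated commutator of $T$ with at most $|\beta|$ of the $W_j$ and $u',v'$ are obtained from $g$ by applying at most $|\beta|$ of the $W_j$; in particular $u',v'$ remain in a fixed finite subset of $\mathscr{S}(\R^d)$. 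Since by hypothesis every such $S'$ extends to a bounded operator on $L^2(\R^d)$, each term is bounded by $\|S'\|_{\mathcal{B}(L^2)}\|u'\|_{L^2}\|v'\|_{L^2}$ uniformly in $\lambda,\mu$, so $|(\lambda-\mu)^\beta F(\lambda,\mu)|\le C_\beta$ for all multi-indices $\beta$, which is the asserted decay.

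It then remains to assemble the symbol. Let $W(u,v)$ be the cross-Wigner distribution normalized so that ${\rm Op}^w(W(u,v))=|u\rangle\langle v|$; for $u,v\in\mathscr{S}(\R^d)$ it lies in $\mathscr{S}(\R^{2d})$, and the phase-space covariance of the Weyl calculus provides a fixed $\rho\in\mathscr{S}(\R^{2d})$ such that, for every multi-index $\alpha$,
\begin{equation*}
\bigl|\partial_z^\alpha W(g_\lambda,g_\mu)(z)\bigr|\le C_\alpha\,(1+|\lambda-\mu|)^{|\alpha|}\,\rho\Bigl(z-\tfrac{\lambda+\mu}{2}\Bigr),\qquad z\in\R^{2d}
\end{equation*}
(the polynomial factor comes from differentiating the oscillatory part of $W$, while the Schwartz localization sits at the midpoint $\tfrac{\lambda+\mu}{2}$, which runs over the lattice $\tfrac12\Lambda$). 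I set $a_0:=\sum_{\lambda,\mu\in\Lambda}F(\lambda,\mu)\,W(g_\lambda,g_\mu)$. Combining the last two displays, choosing $N$ large relative to $|\alpha|$ and $d$, and using that $\sum_{\lambda\in\Lambda}\rho\bigl(z-w-\tfrac{\lambda}{2}\bigr)$ is bounded uniformly in $z,w$, one checks that the series converges and that every $\partial_z^\alpha a_0$ is bounded on $\R^{2d}$, i.e. $a_0\in S^0_{0,0}(\R^{2d})$. The same bounds show that the convergence takes place in a topology on which ${\rm Op}^w$ is continuous, hence ${\rm Op}^w(a_0)=\sum_{\lambda,\mu}F(\lambda,\mu)\,|g_\lambda\rangle\langle g_\mu|=T$ by the reconstruction above; unwinding the definition of ${\rm Op}^w$ against $\Psi,\Phi\in\mathscr{S}(\R^d)$ yields the integral formula in the statement.

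I expect the main obstacle to be the off-diagonal decay step — the ``almost diagonalization'' of $T$ in the Gabor frame — and, within it, the bookkeeping of the iterated recursion: one has to keep track of the finitely many modified windows $u',v'$ and of the nested commutators $S'$ generated at each step, and to check that every pairing is legitimate for operators acting only from $\mathscr{S}(\R^d)$ to $\mathscr{S}'(\R^d)$, which is where the symmetry of the $W_k$ on $\mathscr{S}(\R^d)$ and the intertwining identities are really needed. Once $F$ decays faster than any polynomial off the diagonal, obtaining $a_0\in S^0_{0,0}$ and identifying ${\rm Op}^w(a_0)$ with $T$ are routine lattice summations; a minor additional point is the justification of interchanging the infinite sum with ${\rm Op}^w$, for which a uniform bound on $\sum_{\lambda,\mu}|F(\lambda,\mu)|\,\|W(g_\lambda,g_\mu)\|$ in an appropriate norm suffices.
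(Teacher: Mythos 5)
Your proposal is correct, and its first half coincides in substance with the paper's key lemma: the rapid off-diagonal decay of the Gabor matrix $F(\lambda,\mu)=\langle g_\lambda,Tg_\mu\rangle$ is extracted from the boundedness of the iterated commutators, your intertwining/recursion identity being a tidier, coordinate-free version of the explicit computations in Lemma \ref{lema-horia-1}. Where you genuinely diverge is in the reconstruction of the symbol. The paper works with a compactly supported partition-of-unity window, regularizes the distribution kernel by the Gaussian factors $e^{-\varepsilon(|m|^2+|m'|^2)}$, takes the Weyl symbol $a_\varepsilon$ of the regularized kernel, and proves uniform $S^0_{0,0}$ bounds plus convergence as $\varepsilon\searrow 0$ (Lemmas \ref{lema-horia-2} and \ref{lema-horia-3}); there the compact support of $g$ is what allows trading the polynomial growth in $s$ produced by $\xi$-derivatives for a growth in $|\gamma-\gamma'|$, absorbed by the matrix decay. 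You instead define $a_0=\sum_{\lambda,\mu}F(\lambda,\mu)\,W(g_\lambda,g_\mu)$ directly and control it through the covariance estimate $|\partial^\alpha_z W(g_\lambda,g_\mu)(z)|\lesssim (1+|\lambda-\mu|)^{|\alpha|}\rho\big(z-\tfrac{\lambda+\mu}{2}\big)$, so no regularization or limiting procedure is needed; this is the cleaner, more standard time-frequency route (almost diagonalization of the Gabor matrix) and works for any Schwartz window. What the paper's more pedestrian route buys is that it transfers essentially verbatim to the magnetic Theorem \ref{mainth}, where the frame elements carry the extra phase $e^{i\varphi(x,\gamma)}$ and the analogue of your covariance estimate is no longer a one-line consequence of phase-space translations but has to be replaced by the flux identities and estimates around \eqref{15}. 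Two small points you already flag and which are indeed routine: the majorant $\rho$ should be allowed to depend on $|\alpha|$ (take a Schwartz bound for the finitely many derivatives of $W(g,g)$ up to that order), and the interchange of ${\rm Op}^w$ with the series is justified by pairing against the Schwartz-class Wigner transform of $\Psi,\Phi$, the partial sums converging boundedly and pointwise, while the double frame expansion of $\langle\Psi,T\Phi\rangle$ converges absolutely thanks to the rapid decay of Gabor coefficients of Schwartz functions together with the $L^2$-boundedness of $T$.
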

In this paper, the scalar product of $L^2(\R^d)$ is linear in the second variable and we use the standard notation for H\"{o}rmander type symbols (see Section 7.8 in \cite{H-1}).

\subsection{The magnetic case}

 Let $d\geq 2$. Consider a $2$-form $B(x)=\sum_{1\leq j,k\leq d} B_{jk}(x)dx_j\wedge dx_k$   where  $B_{jk}=-B_{kj}$ are in $BC^\infty(\R^d)$ (i.e. the space of $C^\infty$ bounded functions together with all their derivatives). We assume that  the form is "magnetic" , i.e. that $\partial_j B_{k\ell } +\partial_k B_{\ell j} +\partial_\ell B_{jk}\equiv 0$ holds.  This simply expresses that the $2$-form is closed. Given any fixed $y\in\R^d$ one can construct a  $1$-form $A(\cdot,y)$ such that $B=dA(\cdot,y)$ and 
\begin{align}\label{9}
A_j(x,y)=-\sum_{k=1}^d\int_0^1 s\; (x_k-y_k)\; B_{jk}(y+s(x-y))ds.
\end{align}
We observe that $A_j(x,y)$ grows at most linearly in $|x-y|$, and this fact remains true for all its derivatives in $x$. Let $\Gamma_{y,x}$ denote the straight oriented segment joining $y$ with $x$. Since $A(\cdot,0)-A(\cdot,y)$ is closed and exact, we have the identity 
\begin{align}\label{10}
A_j(x,0)-A_j(x,y)=\partial_{x_j}\varphi(x,y),\quad 1\leq j\leq d\,,
\end{align}
\begin{equation}\label{defvarphi}
\varphi(x,y)=\int_{\Gamma_{y,x}}(A(\cdot,0)-A(\cdot,y))=\int_{\Gamma_{y,x}}A(\cdot,0)\,.
\end{equation}

 Here $A(\cdot,y)$ does not contribute to the integral because it is "orthogonal" to the integration path. The same orthogonality property allows us to identify $\varphi(x,y)$ with the circulation of $A(\cdot,0)$ on the oriented triangle generated by the origin, $y$ and $x$. Stokes' theorem implies that $\varphi(x,y)$  is equal with the magnetic flux through this triangle.

We denote the magnetic flux through the oriented triangle $\Delta(u,v,w)$ having vertices at $u,v,w\in \R^d$ as:
$$
 \Fl (u,v,w):=\int_{\Delta(u,v,w)}B\,,\qquad \Fl(x,y,0)=\varphi(x,y).
$$

We note the identities 
\begin{align}\label{11}
\varphi(x,y)=-\varphi(y,x)\quad {\rm and}\quad \varphi(u,v)+\varphi(v,w)-\varphi(u,w)=\Fl(u,v,w).
\end{align}

Now we can formulate the magnetic version of Beals' criterion as stated  in Theorem 1.1 of \cite{IMP2}. Let $\Pi_j:=D_j-A_j(\cdot,0)$ be the "magnetic" momenta  which also leave $\mathscr{S}(\R^d)$ invariant. We denote by $W_k$ either $X_k$ if $1\leq k\leq d$, or $\Pi_{k-d}$ if $d+1\leq k\leq 2d$. Let us consider a bounded map $T:\mathscr{S}(\R^d)\mapsto \mathscr{S}'(\R^d)$ and all possible commutators as in \eqref{1} but with the new $W_k$'s.  Here is the  magnetic Beals criterion:

\begin{theorem}\label{mainth}Let us assume that both $T$ and all the "magnetic" commutators as in \eqref{1} can be extended to bounded operators on $L^2(\R^d)$. Then there exists a symbol $a_0(x,\xi)\in S^{0}_{0,0}(\R^{2d})$ such that for every $\Psi,\Phi\in \mathscr{S}(\R^d)$ we have: 
$$\langle \Psi,T\Phi\rangle_{L^2(\mathbb{R}^d)}=(2\pi)^{-d}\int_{\R^{d}}\left (\int_{\R^{2d}} e^{i\varphi(x,x')}e^{i\xi \cdot (x-x')}\overline{\Psi(x)}a_0((x+x')/2,\xi)\Phi(x')dxdx' \right )d\xi\,. $$ 
\end{theorem}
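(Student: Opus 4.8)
The plan is to transplant the Gabor-frame proof of Theorem~\ref{T-Beals} to the magnetic setting, the one new ingredient being a magnetically twisted frame. Let $g\in C_c^\infty(\R^d)$ be a real window supported in a small ball and let $\Lambda=\alpha\Z^d\times\beta\Z^d$ be a lattice chosen, as in the non-magnetic proof, so that $\phi_{(\lambda_1,\lambda_2)}(x):=e^{i\lambda_2\cdot x}g(x-\lambda_1)$ defines a normalized tight frame $\{\phi_\lambda\}_{\lambda\in\Lambda}$ of $L^2(\R^d)$. I would then set
\[
\Phi^A_{(\lambda_1,\lambda_2)}(x):=e^{i\varphi(x,\lambda_1)}\,\phi_{(\lambda_1,\lambda_2)}(x),
\]
the twist being, by \eqref{defvarphi}, the $A(\cdot,0)$-parallel transport from $\lambda_1$ to $x$. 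The first point to check is that $\{\Phi^A_\lambda\}_{\lambda\in\Lambda}$ is still a normalized tight frame: because of the support hypothesis on $g$, the partial sum over $\lambda_2$ in $\sum_\lambda|\Phi^A_\lambda\rangle\langle\Phi^A_\lambda|$ is an operator of multiplication (by $(2\pi/\beta)^d|g(\cdot-\lambda_1)|^2$), with which the unimodular multiplication operator $e^{i\varphi(\cdot,\lambda_1)}$ commutes and hence cancels; summing over $\lambda_1$ and using $\sum_{\lambda_1}|g(\cdot-\lambda_1)|^2\equiv(\beta/2\pi)^d$ gives $\sum_\lambda|\Phi^A_\lambda\rangle\langle\Phi^A_\lambda|=\mathrm{Id}$.

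Next I would compute the action of the magnetic generators on the new frame and run the commutator recursion. By \eqref{10} one has $e^{-i\varphi(\cdot,\lambda_1)}\Pi_je^{i\varphi(\cdot,\lambda_1)}=D_j-A_j(\cdot,\lambda_1)$; since $A_j(x,\lambda_1)$ vanishes at $x=\lambda_1$ and, with its $x$-derivatives, grows at most linearly (the remark after \eqref{9}), multiplication by $A_j(\cdot,\lambda_1)$ sends $g(\cdot-\lambda_1)$ to a translate of a function in $C_c^\infty$ whose seminorms are uniform in $\lambda_1$. Together with $X_k\phi_\lambda=(\lambda_1)_k\phi_\lambda+(\text{window }x_kg)_\lambda$ and $D_j\phi_\lambda=(\lambda_2)_j\phi_\lambda+(\text{window }D_jg)_\lambda$ this gives, for all $k$,
\[
W_k\Phi^A_\lambda=\lambda_k\,\Phi^A_\lambda+\Psi^{(k)}_\lambda,
\]
$\lambda_k$ being the $k$-th Cartesian coordinate of $\lambda\in\R^{2d}$ and $\Psi^{(k)}_\lambda$ a magnetically twisted Gabor packet centred at the same $\lambda$, built from a modified compactly supported window controlled uniformly in $\lambda$. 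As each $W_k$ is symmetric the same governs $\langle W_k\Phi^A_\mu,\cdot\rangle$, and the commutator brackets then yield, for $\mathcal M_{\mu\lambda}:=\langle\Phi^A_\mu,T\Phi^A_\lambda\rangle$ and the frame expansion $\Psi^{(k)}_\nu=\sum_\rho c^{(k)}_\rho(\nu)\Phi^A_\rho$,
\[
\langle\Phi^A_\mu,[W_k,T]\Phi^A_\lambda\rangle=(\mu_k-\lambda_k)\,\mathcal M_{\mu\lambda}+\sum_\rho\overline{c^{(k)}_\rho(\mu)}\,\mathcal M_{\rho\lambda}-\sum_\rho c^{(k)}_\rho(\lambda)\,\mathcal M_{\mu\rho},
\]
exactly the non-magnetic recursion. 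Here the key estimate is $|c^{(k)}_\rho(\nu)|\le C_N\langle\rho-\nu\rangle^{-N}$ uniformly in $\nu$: on the overlap of the compact supports of $\Phi^A_\rho$ and $\Psi^{(k)}_\nu$ one has $|\rho_1-\nu_1|=O(1)$, and there, by the cocycle \eqref{11}, $\varphi(x,\nu_1)-\varphi(x,\rho_1)$ is a constant plus the flux $\Fl(x,\nu_1,\rho_1)$ through a triangle with all vertices at mutual distance $O(1)$, hence a smooth unimodular factor with all derivatives uniformly bounded, so $c^{(k)}_\rho(\nu)$ is, up to such a factor, an ordinary Gabor coefficient of a fixed Schwartz function. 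Since $\|\Phi^A_\lambda\|_{L^2}=\|g\|_{L^2}$ is constant and, by hypothesis, every multiple commutator of $T$ is $L^2$-bounded, all multi-commutator matrices have uniformly bounded entries; inducting on the order then gives $\sup_{\mu,\lambda}|(\mu-\lambda)^\gamma\mathcal M_{\mu\lambda}|<\infty$ for every multi-index $\gamma$, i.e.\ $|\mathcal M_{\mu\lambda}|\le C_N\langle\mu-\lambda\rangle^{-N}$ for all $N$.

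It remains to reconstruct the symbol. From $T=\sum_{\mu,\lambda}\mathcal M_{\mu\lambda}|\Phi^A_\mu\rangle\langle\Phi^A_\lambda|$ (tightness), twisting kernels by $e^{-i\varphi}$ suggests
\[
a_0(z,\xi):=\sum_{\mu,\lambda}\mathcal M_{\mu\lambda}\int_{\R^d}e^{-i\xi\cdot y}\,e^{\,i\Theta_{\mu\lambda}(z+\frac y2,\,z-\frac y2)}\,\phi_\mu\!\big(z+\tfrac y2\big)\,\overline{\phi_\lambda\!\big(z-\tfrac y2\big)}\,dy,\quad \Theta_{\mu\lambda}(x,x'):=\varphi(x,\mu_1)-\varphi(x',\lambda_1)-\varphi(x,x'),
\]
so that the magnetic Weyl quantization $\mathrm{Op}^A(a_0)$ on the right-hand side of the claimed identity has, by design, the same kernel as $T$. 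On the compact $y$-support of each integrand, $z\approx(\mu_1+\lambda_1)/2$ and $y\approx\mu_1-\lambda_1$; applying \eqref{11} twice gives $\Theta_{\mu\lambda}(x,x')=\Fl(x,\mu_1,\lambda_1)-\Fl(x,x',\lambda_1)-\varphi(\mu_1,\lambda_1)$, a combination of triangle fluxes whose vertices lie within $O(\langle\mu_1-\lambda_1\rangle)$ of one another, so $\Theta_{\mu\lambda}$ and all its $z,y$-derivatives grow only polynomially in $|\mu_1-\lambda_1|\le|\mu-\lambda|$. Carrying out the $y$-integral (a Fourier transform of a Schwartz function, of frequency $\xi-\tfrac{\mu_2+\lambda_2}2$ because of the modulation hidden in $\phi_\mu\overline{\phi_\lambda}$) one gets, uniformly in $z$, $|\partial_z^\alpha\partial_\xi^\beta(\text{integral})|\le C_{\alpha,\beta,N}\langle\mu-\lambda\rangle^{M(\alpha,\beta)}\langle\xi-\tfrac{\mu_2+\lambda_2}2\rangle^{-N}$; combined with the rapid decay of $\mathcal M_{\mu\lambda}$ this makes the series for $a_0$ and all its derivatives converge uniformly, so $a_0\in S^0_{0,0}(\R^{2d})$. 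That $T=\mathrm{Op}^A(a_0)$ then follows by checking, at the level of Schwartz kernels in $\mathscr S'(\R^{2d})$, that $e^{-i\varphi}$ times the kernel of $T$ has Weyl symbol $a_0$ — the relevant series of compactly supported functions converging in $\mathscr S'$ once one integrates by parts against a test function to exploit the modulations $e^{i\mu_2\cdot x}$, $e^{-i\lambda_2\cdot x'}$ and the Weyl correspondence being a topological isomorphism of $\mathscr S'$.

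The main obstacle — and the only genuinely magnetic difficulty — is the control of the flux phases. Both the residual phase in the frame re-expansion and $\Theta_{\mu\lambda}$ are unbounded functions whose underlying triangles grow with $|\mu_1-\lambda_1|$; the scheme works only because the Gabor localization confines these triangles to ones of shape $O(1)$ in every direction but one, so that fluxes and their derivatives grow merely polynomially, which is then absorbed by the super-polynomial off-diagonal decay of $\mathcal M_{\mu\lambda}$. This is exactly the place where \cite{IMP2} had to develop the magnetic version of Bony's lemma; in the present frame-based approach it reduces to the two cocycle identities \eqref{11} together with the uniform linear-growth bounds on $A_j(\cdot,y)$ noted after \eqref{9}.
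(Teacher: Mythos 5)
Your proposal is correct and follows essentially the same route as the paper: a magnetically twisted Gabor tight frame, off-diagonal decay of the matrix elements obtained from the magnetic commutators together with the gauge relation \eqref{10}, control of the flux phases through the identities \eqref{11} (triangles of bounded width, polynomial growth absorbed by the rapid decay of $\mathcal{M}_{\mu\lambda}$), and reconstruction of $a_0$ as the Weyl symbol of $e^{-i\varphi(x,x')}$ times the kernel of $T$. The only deviations are technical (a general lattice in place of the $\Z^d\times\Z^d$ quadratic partition of unity, a frame re-expansion recursion instead of the paper's direct integration-by-parts identities, and direct $\mathscr{S}'$-convergence arguments in place of the paper's Gaussian $\varepsilon$-regularization), none of which changes the substance of the argument.
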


\vspace{0.5cm}

 The rest of this manuscript is as follows: in Section \ref{sec2} we construct a family of tight frames which generalizes the classical Gabor case, in Section \ref{sec3} we give the proof of Theorem \ref{T-Beals},  and in Section \ref{sec4} we prove Theorem \ref{mainth}. 

\section{A magnetic normalized Gabor tight frame}\label{sec2}

Let $g\in C_0^\infty(\R^d;\mathbb{R})$  such that
${\rm supp}\,g\subset (-1,1)^d$ and  
\begin{align}\label{2}
g_\gamma(x):=g(x-\gamma),\quad \sum_{\gamma\in \Z^d}  g_\gamma^2(x) =1\,,\quad \forall x\in \R^d\,. 
\end{align}

 Let 
$\psi_m(x):= e^{i m\cdot x}$, for any $m\in \Z^d$. Denote by $\big(\tau_zf\big)(x):=f(x-z)$ the translation with $z\in\mathbb{R}^d$.  
\begin{lemma}\label{lema-gabor}
The functions 
\begin{equation}\label{eq:gabfra}
\big \{G^\varphi_{\gamma,m}(x):=(2\pi)^{-d/2}e^{i \varphi(x,\gamma)}(\tau_\gamma g\psi_m)(x):\; \gamma,m\in\Z^d\big \}\,,
\end{equation} 
with $\varphi$ defined in \eqref{defvarphi}, 
satisfy the identity
\beq\label{F-desc-ftest}
f(x)=\sum_{\gamma,m} G^\varphi_{\gamma,m}(x)\langle G^\varphi_{\gamma,m},f\rangle_{L^2(\mathbb{R}^d)}\,,\quad\forall f\in\mathscr{S}(\mathbb{R}^d)\,,
\eeq
where the series is absolutely convergent. In particular, these functions generate a normalized tight frame in $L^2(\R^d)$ (see \cite{Gr, Ch}). 
\end{lemma}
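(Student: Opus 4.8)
The core is to prove the reconstruction identity \eqref{F-desc-ftest}; tightness of the frame then follows formally. Note the magnetic phase $e^{i\varphi(x,\gamma)}$ is unimodular and smooth in $x$, so $G^\varphi_{\gamma,m}$ differs from the classical Gabor atom only by this phase, which is supported (in the $x$-variable relevant to $\tau_\gamma g$) on $\gamma+(-1,1)^d$. The first step is to pin down what the identity says pointwise: fix $x\in\R^d$. Only the finitely many $\gamma\in\Z^d$ with $|x-\gamma|_\infty<1$ contribute to the left side's $\gamma$-sum, because $(\tau_\gamma g)(x)=g(x-\gamma)=0$ otherwise. So I would like to compute, for each such fixed $\gamma$, the inner $m$-sum
\[
\sum_{m\in\Z^d} G^\varphi_{\gamma,m}(x)\,\langle G^\varphi_{\gamma,m},f\rangle_{L^2(\R^d)}.
\]
Write out the inner product: $\langle G^\varphi_{\gamma,m},f\rangle=(2\pi)^{-d/2}\int_{\R^d} e^{-i\varphi(y,\gamma)}\,g(y-\gamma)\,e^{-im\cdot y}\,f(y)\,dy$. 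Hence the $m$-sum becomes
\[
(2\pi)^{-d}\,e^{i\varphi(x,\gamma)}g(x-\gamma)\sum_{m\in\Z^d}e^{im\cdot x}\int_{\R^d}e^{-im\cdot y}\,\big(e^{-i\varphi(y,\gamma)}g(y-\gamma)f(y)\big)\,dy .
\]

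**Key step: Poisson summation / Fourier series on the torus.** The function $h_\gamma(y):=e^{-i\varphi(y,\gamma)}g(y-\gamma)f(y)$ is smooth and supported in $\gamma+(-1,1)^d\subset \gamma+[-1,1]^d$, a fundamental domain for $\R^d/(2\pi\cdot\tfrac1{?})$ — more precisely, since $\psi_m(x)=e^{im\cdot x}$ with $m\in\Z^d$, the relevant period lattice is $2\pi\Z^d$, and $(-1,1)^d$ sits inside a period cell of side $2\pi$. Therefore $\sum_{m\in\Z^d} e^{im\cdot(x-y)}$ acts, against the test function $h_\gamma$, as the $2\pi\Z^d$-periodization of the Dirac mass, i.e. $\sum_m e^{im\cdot x}\int e^{-im\cdot y}h_\gamma(y)\,dy = (2\pi)^d\sum_{n\in\Z^d} h_\gamma(x+2\pi n)=(2\pi)^d h_\gamma(x)$, the last equality because only the $n=0$ term has $x+2\pi n$ inside the support when $|x-\gamma|_\infty<1$ (here one uses that the support has diameter $<2$ in each coordinate, hence strictly less than $2\pi$; this is exactly why ${\rm supp}\,g\subset(-1,1)^d$ is imposed). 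Substituting back, the inner $m$-sum equals $e^{i\varphi(x,\gamma)}g(x-\gamma)\cdot h_\gamma(x)=e^{i\varphi(x,\gamma)}g(x-\gamma)\,e^{-i\varphi(x,\gamma)}g(x-\gamma)f(x)=g(x-\gamma)^2 f(x)$. Summing over the relevant $\gamma$ and invoking the partition of unity $\sum_{\gamma}g(x-\gamma)^2=1$ from \eqref{2} yields $f(x)$, as claimed. Note the magnetic phase cancels cleanly — this is the whole point of inserting $e^{i\varphi(x,\gamma)}$ rather than, say, $e^{i\varphi(x,0)}$.

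**Convergence and the remaining obstacle.** Two things need care. First, absolute convergence of the double series in \eqref{F-desc-ftest}: for fixed $x$ the $\gamma$-sum is finite, so it suffices to bound $\sup_{\gamma}\sum_m |G^\varphi_{\gamma,m}(x)|\,|\langle G^\varphi_{\gamma,m},f\rangle|$. Since $|G^\varphi_{\gamma,m}(x)|\le (2\pi)^{-d/2}\|g\|_\infty$ uniformly, one only needs $\sum_m |\langle G^\varphi_{\gamma,m},f\rangle|<\infty$; but $\langle G^\varphi_{\gamma,m},f\rangle$ is, up to constants, the $m$-th Fourier coefficient of the compactly supported smooth function $h_\gamma$, which decays faster than any power of $|m|$ (with the derivative bounds on $A_j(\cdot,\gamma)$, hence on $\varphi(\cdot,\gamma)$, controlled using \eqref{9} — the at-most-linear growth of $A$ and its derivatives, localized to the unit cube, gives uniform-in-$\gamma$ smoothness bounds on $h_\gamma$). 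This also gives uniformity in $\gamma$, so the series converges absolutely and the interchange of summations is legitimate. Second, the tight-frame conclusion: from \eqref{F-desc-ftest}, pairing with $f$ gives $\|f\|^2=\sum_{\gamma,m}|\langle G^\varphi_{\gamma,m},f\rangle|^2$ for $f\in\mathscr{S}(\R^d)$, and density of $\mathscr{S}$ in $L^2$ together with the uniform bound $\sum_{\gamma,m}|\langle G^\varphi_{\gamma,m},f\rangle|^2\le \|f\|^2$ (Bessel, which one gets from the same Fourier-coefficient computation) extends the Parseval identity to all of $L^2$. The one genuinely delicate point is the \emph{uniformity in $\gamma$} of the smoothness estimates on $h_\gamma$: one must check that the phase $\varphi(y,\gamma)=\int_{\Gamma_{\gamma,y}}A(\cdot,0)$, restricted to $y\in\gamma+(-1,1)^d$, has all $y$-derivatives bounded uniformly in $\gamma$ — this follows because $\partial_{y_j}\varphi(y,\gamma)=A_j(y,0)-A_j(y,\gamma)$ by \eqref{10}, and by \eqref{9} the difference and all its further $y$-derivatives grow at most linearly in the bounded quantity $|y-\gamma|$. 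I expect this uniformity check to be the main obstacle; everything else is the classical Gabor argument with an inert unimodular factor.
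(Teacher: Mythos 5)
Your overall architecture is the same as the paper's: read $\langle G^\varphi_{\gamma,m},f\rangle$ as the $m$-th Fourier coefficient of the smooth, compactly supported function $h_\gamma:=e^{-i\varphi(\cdot,\gamma)}g_\gamma f$ regarded as a function on the torus of side $2\pi$ (your Poisson-summation step is exactly the Fourier inversion formula used in the paper), note that the unimodular magnetic phase cancels pointwise, and finish with the partition of unity \eqref{2}. So the identity \eqref{F-desc-ftest} at a fixed $x$ is obtained correctly, since only finitely many $\gamma$ contribute there and for each fixed $\gamma$ the Fourier series of $h_\gamma$ converges absolutely.

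The genuine gap sits exactly in the point you single out as the main obstacle, and your resolution of it is wrong. You claim that $\partial_{y_j}\varphi(y,\gamma)=A_j(y,0)-A_j(y,\gamma)$ and its further $y$-derivatives are bounded uniformly in $\gamma$ on $y\in\gamma+(-1,1)^d$ because they ``grow at most linearly in the bounded quantity $|y-\gamma|$''. By \eqref{9} the term $A_j(y,\gamma)$ is indeed $O(|y-\gamma|)=O(1)$ there, but $A_j(y,0)$ grows linearly in $|y|\sim|\gamma|$, not in $|y-\gamma|$: for a constant field, \eqref{9} gives $A_j(x,y)=-\tfrac12\sum_k(x_k-y_k)B_{jk}$, hence $A_j(y,0)-A_j(y,\gamma)=-\tfrac12\sum_k\gamma_k B_{jk}$, of size proportional to $|\gamma|$. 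Consequently $h_\gamma$ does \emph{not} admit smoothness bounds uniform in $\gamma$ (morally, the magnetic phase modulates $h_\gamma$ at frequency $\sim|\gamma|$), and the ``uniform-in-$\gamma$'' rapid decay in $m$ on which you base absolute convergence, the interchanges of summation, and the Bessel bound is unjustified as stated. The fix is what the paper does: when integrating by parts, use that $f$ is Schwartz, so its rapid decay (together with that of its derivatives) on $\operatorname{supp}g_\gamma$ absorbs the polynomial-in-$\langle\gamma\rangle$ growth produced by the phase derivatives, yielding the two-sided estimate $|\langle G^\varphi_{\gamma,m},f\rangle|\le C_{f,N}\langle\gamma\rangle^{-N}\langle m\rangle^{-N}$ of \eqref{hc1}, which gives all the convergence you need. (For the tight-frame assertion a cleaner route, requiring no smoothness at all, is per-$\gamma$ Parseval on the torus, $\sum_m|\langle G^\varphi_{\gamma,m},f\rangle|^2=\|g_\gamma f\|_{L^2}^2$, followed by summation over $\gamma$ using \eqref{2}.)
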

 
\begin{proof} 
We have
\begin{align}\label{horia1}
\langle G^\varphi_{\gamma,m},f\rangle_{L^2(\mathbb{R}^d)}& =
(2\pi)^{-d/2}\int_{\{\max_{j=1}^d |x_j-\gamma_j|\leq 1\}}\,g(x-\gamma)e^{- i m\cdot(x-\gamma)}e^{-i \varphi(x,\gamma)}f(x)\,dx \nonumber \\
& =(2\pi)^{-d/2}\int_{\{\max_{j=1}^d |x_j|\leq 1\}}\,e^{- i m\cdot x }g(x)\big(\tau_{-\gamma}e^{-i \varphi(\cdot,\gamma)}f\big)(x)\, dx  \nonumber\\ &  =:\mathcal{F}\big (g\tau_{-\gamma}e^{-i \varphi(\cdot,\gamma)}f\big )(m)
\end{align}
where $g\tau_{-\gamma}e^{-i \varphi(\cdot,\gamma)}f\in C^\infty_0\big((-1,1)^d\big)$ may be naturally considered, via its  $(2\pi\Z)^d$-periodic extension to $\R^d$,  as a function in $C^\infty (\mathbb{T}^d)$, and  where the right hand side of \eqref{horia1} is nothing but the $m$'th Fourier coefficient of $g\tau_{-\gamma}e^{-i \varphi(\cdot,\gamma)}f$. 

Integrating by parts in \eqref{horia1}, using \eqref{10} and the fact that $f$ is a Schwartz function, then given any $N\geq 1$ we may find a constant $C_{f,N}$  such that for every $m$ and $\gamma$ we have:
\begin{align}\label{hc1}
|\langle G^\varphi_{\gamma,m},f\rangle_{L^2(\mathbb{R}^d)}|\leq C_{f,N}\; <\gamma>^{-N} <m>^{-N} .
\end{align}
By the Fourier inversion formula and \eqref{horia1} we obtain:
$$
g\tau_{-\gamma}e^{-i \varphi(\cdot,\gamma)}f=(2\pi)^{-d/2}\underset{m\in\mathbb{Z}^d}{\sum}\psi_m \langle G^\varphi_{\gamma,m},f\rangle_{L^2(\mathbb{R}^d)}
$$
where the series converges absolutely due to \eqref{hc1} where we fix for example $N\geq 2d$. Translating by $\gamma\in\mathbb{Z}^d$ we obtain:
$$
g_\gamma(x) f(x)=(2\pi)^{-d/2}\underset{m\in\mathbb{Z}^d}{\sum}e^{i \varphi(x,\gamma)}(\tau_\gamma\psi_m)(x)\langle G^\varphi_{\gamma,m},f\rangle_{L^2(\mathbb{R}^d)},
$$
which coupled with \eqref{2} leads to:
$$ f=
\underset{\gamma\in\mathbb{Z}^d}{\sum}g_\gamma(g_\gamma f)=\underset{\gamma\in\mathbb{Z}^d}{\sum}\underset{m\in\mathbb{Z}^d}{\sum}G^\varphi_{\gamma,m}
\langle G^\varphi_{\gamma,m},f\rangle_{L^2(\mathbb{R}^d)}.
$$
This proves \eqref{F-desc-ftest}. 
\end{proof}

\section{Proof of Theorem \ref{T-Beals}}\label{sec3}

In order to simplify  notation, in the non-magnetic case $(\varphi\equiv 0$) we denote the Gabor frame by $G_{\gamma,m}$. By assumption, $T$ can be extended to a bounded operator on $L^2(\R^d)$ with norm $\|T\|$, thus:
\beq\label{est-Tmatrix-2}
\mathcal{T}_{\gamma,\gamma';m,m'}:= \big \langle G_{\gamma,m}, T G_{\gamma',m'}\big \rangle,\quad \big|\mathcal{T}_{\gamma,\gamma';m,m'}\big|\leq (2\pi)^{-d}\|g\|_{L^2(\mathbb{R}^d)}^2\,\|T\|\,.
\eeq

For every $N\in \mathbb N $, an application of the form
$$
\mathscr{S}(\mathbb{R}^d)\times\mathscr{S}(\mathbb{R}^d)\ni (\Phi,\Psi) \mapsto \underset{\max(|\gamma|,|m|,|\gamma'|,|m'|)\leq N}{\sum}\mathcal{T}_{\gamma,\gamma';m,m'}
\langle\Psi, G_{\gamma,m} \big\rangle_{L^2(\mathbb{R}^d)}\langle G_{\gamma',m'}, \Phi\big\rangle_{L^2(\mathbb{R}^d)}
$$
defines a tempered distribution on $\mathbb{R}^d\times\mathbb{R}^d$. Then the distribution kernel of the bounded operator $T:L^2(\mathbb{R}^d)\rightarrow L^2(\mathbb{R}^d)$ is given by the series
\begin{align}\label{5}
\mathring{T}=\underset{N\nearrow\infty}{\lim}\underset{\max(|\gamma|,|m|)\leq N}{\sum}\ \underset{\max(|\gamma'|,|m'|)\leq N}{\sum}\mathcal{T}_{\gamma,\gamma';m,m'} \big(G_{\gamma,m}\otimes \overline{G_{\gamma',m'}}\big),
\end{align}
where each finite sum belongs to $BC^\infty(\mathbb{R}^d\times\mathbb{R}^d)$  and converges weakly in the space of the tempered distributions on $\mathbb{R}^d\times\mathbb{R}^d$.  
If we restrict the distribution $\mathring{T}$ to a compact in $\mathbb R^d\times \mathbb R^d$ there exists a finite number of non-zero contributions from the series in $\gamma$ and $\gamma'$, but generally, the series in $m$ and $m'$ are not  absolutely  convergent. In order to remedy that difficulty, we make a regularization and define for $\varepsilon>0$:
\beq\label{F-Tepsilon}
\mathring{T}_\varepsilon(x,x'):=(2\pi)^{-d}\sum_{\gamma,\gamma'} \sum_{m,m'}\mathcal{T}_{\gamma,\gamma';m,m'} {g}_{\gamma}(x){g}_{\gamma'}(x') e^{-\varepsilon(|m|^2+|m'|^2)}e^{im\cdot (x-\gamma)}e^{-im'\cdot (x'-\gamma')}\,.
\eeq

Due to \eqref{est-Tmatrix-2}, it is not difficult to see that for a fixed $\varepsilon>0$ the function $\mathring{T}_\varepsilon$ is jointly continuous. We will later see that it is much more regular. We start by proving an estimate which is stronger than \eqref{est-Tmatrix-2}.
 
\begin{lemma}\label{lema-horia-1} 
Given any pair $N,M\geq 1$, there exists a constant $C_{N,M}$ such that
\begin{align}\label{6}
|\mathcal{T}_{\gamma,\gamma';m,m'}|\leq C_{N,M} <\gamma-\gamma'>^{-N} <m-m'>^{-M}.
\end{align}
\end{lemma}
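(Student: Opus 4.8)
\noindent The plan is to convert the boundedness of the iterated commutators \eqref{1} into the claimed decay of $\mathcal{T}_{\gamma,\gamma';m,m'}$ using the one structural feature of the Gabor frame that takes over the role played by the Fourier transform in the classical proofs: each $G_{\gamma,m}$ is, up to the unimodular modulation $e^{im\cdot(x-\gamma)}$, the translate by $\gamma$ of a fixed $C_0^\infty$ function supported in $(-1,1)^d$. It is convenient to enlarge the set of matrix elements to
\[
\mathcal{M}(S;h,h')_{\gamma,m,\gamma',m'}:=(2\pi)^{-d}\big\langle h(\cdot-\gamma)\,e^{im\cdot(\cdot-\gamma)},\;S\,\big(h'(\cdot-\gamma')\,e^{im'\cdot(\cdot-\gamma')}\big)\big\rangle_{L^2(\R^d)},
\]
where $h,h'\in C_0^\infty((-1,1)^d)$ and $S$ ranges over $T$ together with all iterated commutators appearing in \eqref{1}. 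Since each such $S$ extends to a bounded operator on $L^2(\R^d)$ and the two test functions have $L^2$-norm $\|h\|_{L^2}$, $\|h'\|_{L^2}$ (translation and modulation being unitary), one has the uniform bound $|\mathcal{M}(S;h,h')_{\gamma,m,\gamma',m'}|\le(2\pi)^{-d}\|h\|_{L^2}\|h'\|_{L^2}\|S\|$, and $\mathcal{T}_{\gamma,\gamma';m,m'}=\mathcal{M}(T;g,g)_{\gamma,m,\gamma',m'}$.

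The next step is to record two elementary ``moves''. Writing $x_k=\gamma_k+(x_k-\gamma_k)$ in the left slot and $x'_k=\gamma'_k+(x'_k-\gamma'_k)$ in the right slot, using that $X_k$ is symmetric on $\mathscr{S}(\R^d)$, and noting that $(x_k-\gamma_k)h(x-\gamma)=(y_kh)(x-\gamma)$ with $y_kh\in C_0^\infty((-1,1)^d)$, one gets for $1\le k\le d$
\[
(\gamma_k-\gamma'_k)\,\mathcal{M}(S;h,h')=\mathcal{M}([X_k,S];h,h')-\mathcal{M}(S;y_kh,h')+\mathcal{M}(S;h,y_kh').
\]
Similarly, since $(D_k-m_k)\big(h(\cdot-\gamma)e^{im\cdot(\cdot-\gamma)}\big)=-i(\partial_kh)(\cdot-\gamma)e^{im\cdot(\cdot-\gamma)}$ and $D_k$ is symmetric on $\mathscr{S}(\R^d)$, the same computation with $m$ in place of $\gamma$ yields
\[
(m_k-m'_k)\,\mathcal{M}(S;h,h')=\mathcal{M}([D_k,S];h,h')-i\,\mathcal{M}(S;\partial_kh,h')-i\,\mathcal{M}(S;h,\partial_kh').
\]
In both identities the right-hand side is again a finite linear combination of objects $\mathcal{M}(\widetilde S;\widetilde h,\widetilde h')$ in which $\widetilde S$ is obtained from $S$ by at most one further commutator with a $W$-operator — hence still one of the operators in \eqref{1} — and $\widetilde h,\widetilde h'$ are obtained from $h,h'$ by multiplying by a coordinate or by differentiating, so they remain in $C_0^\infty((-1,1)^d)$ with controlled $L^2$-norms. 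Transferring $X_k$ and $D_k$ across the pairing is legitimate because the $G$-elements lie in $C_0^\infty(\R^d)\subset\mathscr{S}(\R^d)$ while each $S$ maps $\mathscr{S}$ into $\mathscr{S}'$ and also extends boundedly to $L^2(\R^d)$.

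Finally I would iterate. Fix $N,M\ge1$ and coordinates $k_*,\ell_*\in\{1,\dots,d\}$ realizing $\max_k|\gamma_k-\gamma'_k|$ and $\max_\ell|m_\ell-m'_\ell|$ respectively; applying the first move $N$ times with index $k_*$ and the second move $M$ times with index $\ell_*$ to $\mathcal{T}_{\gamma,\gamma';m,m'}=\mathcal{M}(T;g,g)_{\gamma,m,\gamma',m'}$ expresses $(\gamma_{k_*}-\gamma'_{k_*})^N(m_{\ell_*}-m'_{\ell_*})^M\mathcal{T}_{\gamma,\gamma';m,m'}$ as a finite sum (of cardinality depending only on $N,M,d$) of generalized matrix elements, each bounded by $C\,\|S\|$ with $S$ an iterated commutator of $T$ with at most $N+M$ of the $W$-operators and $C$ depending only on $g,N,M,d$. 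Doing the same for all pairs of exponents $\le(N,M)$ and combining with \eqref{est-Tmatrix-2} to cover the regime where $|\gamma-\gamma'|$ or $|m-m'|$ is small, one arrives at $\langle\gamma-\gamma'\rangle^N\langle m-m'\rangle^M|\mathcal{T}_{\gamma,\gamma';m,m'}|\le C_{N,M}$, i.e.\ \eqref{6}, with $C_{N,M}$ controlled by $\|g\|$ and finitely many of the commutator norms from \eqref{1}. The step requiring the most care is precisely this bookkeeping: checking that every operator produced along the iteration is genuinely one of the multi-commutators in \eqref{1} (including the mixed $X$--$D$ ones) and that the generators never leave $C_0^\infty((-1,1)^d)$ with $L^2$-norms under control; once that is set up, \eqref{6} follows by collecting finitely many uniformly bounded terms, and this is the analogue, in the frame language, of Bony's lemma used in the classical argument.
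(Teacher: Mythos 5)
Your proof is correct and follows essentially the same route as the paper: the two commutation identities you iterate (trading $\gamma_k-\gamma'_k$ for a commutator with $X_k$ plus modified windows $y_kh$, and $m_k-m'_k$ for a commutator with $D_k$ plus windows $\partial_k h$) are exactly the one-step identities displayed in the paper's proof, with the remaining content being the bookkeeping of the iteration that the paper leaves implicit. Your explicit tracking that the windows stay in $C_0^\infty((-1,1)^d)$ and that every operator produced is one of the multi-commutators in \eqref{1} is a faithful elaboration of that sketch, not a different argument.
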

\begin{proof}

The decay in $\gamma-\gamma'$ is a consequence of the fact that all commutators of $T$ with the position operators are bounded, see \eqref{1}.
 For example: 
$$
\begin{array}{ll}
(\gamma_1-\gamma'_1) \mathcal{T}_{\gamma,\gamma';m,m'}&= 
(2\pi)^{-d}\big \langle (\gamma_1-X_1) g_\gamma \psi_m(\cdot -\gamma), T g_{\gamma'} \psi_{m'}(\cdot -\gamma')\big \rangle\\&
\quad + (2\pi)^{-d}\big \langle g_\gamma \psi_m(\cdot -\gamma), [X_1,T]  g_{\gamma'} \psi_{m'}(\cdot -\gamma')\big \rangle\\&
\quad + (2\pi)^{-d}\big \langle g_\gamma \psi_m(\cdot -\gamma), T  (X_1-\gamma'_1) g_{\gamma'} \psi_{m'}(\cdot -\gamma')\big \rangle\,.
\end{array}
$$
The decay in $m-m'$ is due to the boundedness of the commutators of $T$ with the momentum operators (one  has to integrate by parts). For example: 
$$
\begin{array}{ll}
(m_1-m'_1) \mathcal{T}_{\gamma,\gamma';m,m'}&= 
(2\pi)^{-d}\big \langle m_1 g_\gamma \psi_m(\cdot -\gamma), T g_{\gamma'} \psi_{m'}(\cdot -\gamma')\big \rangle\\&
\qquad \qquad - (2\pi)^{-d}\big \langle g_\gamma \psi_m(\cdot -\gamma), T  m'_1 g_{\gamma'} \psi_{m'}(\cdot -\gamma')\big \rangle\\
&= 
(2\pi)^{-d}\big \langle g_\gamma (D_{x_1}\psi_m)(\cdot -\gamma), T g_{\gamma'} \psi_{m'}(\cdot -\gamma')\big \rangle\\&
\qquad \qquad - (2\pi)^{-d}\big \langle g_\gamma \psi_m(\cdot -\gamma), T   g_{\gamma'} (D_{x_1}\psi_{m'}) (\cdot -\gamma')\big \rangle\\
& = 
(2\pi)^{-d}\big \langle (D_{x_1} g)_\gamma \psi_m(\cdot -\gamma), T g_{\gamma'} \psi_{m'}(\cdot -\gamma')\big \rangle\\
&
\qquad - (2\pi)^{-d}\big \langle g_\gamma \psi_m(\cdot -\gamma), T (D_{x_1}g)_{\gamma'} \psi_{m'}(\cdot -\gamma')\big \rangle\\
& \qquad + (2\pi)^{-d}\big \langle g_\gamma \psi_m(\cdot -\gamma), [D_{x_1},T]  g_{\gamma'} \psi_{m'}(\cdot -\gamma')\big \rangle.\\
\end{array}
$$
\end{proof}

\vspace{0.2cm}

The next Lemma will show that the approximating kernel $\mathring{T}_\varepsilon$ has a fast off-diagonal decay. 
\begin{lemma}\label{lema-horia-2} 
Let $\varepsilon>0$. Then for every fixed $t\in \R^d$, the function
$$\R^d\ni s\mapsto \mathring{T}_\varepsilon(t+s/2,t-s/2)\in \C$$
belongs to $\mathscr{S}(\R^d)$.
\end{lemma}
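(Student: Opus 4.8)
The plan is to make the change of variables $x=t+s/2$, $x'=t-s/2$ in \eqref{F-Tepsilon}, differentiate the resulting series in $s$ term by term, and show that the outcome decays faster than any power of $\langle s\rangle$, together with all its derivatives.

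The starting point is an elementary localization of the lattice sum. For a fixed $t$ and an arbitrary $s\in\R^d$, the $(\gamma,\gamma',m,m')$-term in \eqref{F-Tepsilon} can be non-zero only when $g_\gamma(t+s/2)\neq 0$ and $g_{\gamma'}(t-s/2)\neq 0$, i.e.\ when $\gamma\in t+s/2+(-1,1)^d$ and $\gamma'\in t-s/2+(-1,1)^d$. Hence for each $s$ there are at most $2^d$ admissible $\gamma$ and at most $2^d$ admissible $\gamma'$, and writing $\gamma-\gamma'=(\gamma-t-s/2)-(\gamma'-t+s/2)+s$ we get
$$|\gamma-\gamma'|\ \ge\ |s|-|\gamma-t-s/2|-|\gamma'-t+s/2|\ >\ |s|-2\sqrt d .$$
Combined with $\langle\gamma-\gamma'\rangle\ge 1$, this yields a constant $c_d>0$ with $\langle\gamma-\gamma'\rangle\ge c_d\langle s\rangle$ for every $s$ and every admissible pair. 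Already the crude bound \eqref{est-Tmatrix-2}, together with $|g_\gamma|,|g_{\gamma'}|\le\|g\|_{L^\infty(\R^d)}$ and the Gaussian factor $e^{-\varepsilon(|m|^2+|m'|^2)}$, shows that the series and all its formal $s$-derivatives converge absolutely and uniformly on compact sets: each $\partial_s$ falls either on a cutoff $g_\gamma(t\pm s/2)$, which stays bounded, or on an exponential $e^{\pm i m\cdot s/2}$ resp.\ $e^{\mp i m'\cdot s/2}$, producing a polynomial factor in $(m,m')$ that is dominated by $e^{-\varepsilon(|m|^2+|m'|^2)}$. Thus $s\mapsto\mathring T_\varepsilon(t+s/2,t-s/2)$ is $C^\infty$ and may be differentiated term by term.

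Next I would feed in Lemma \ref{lema-horia-1}. Fixing $N\ge 1$ and using \eqref{6} (say with $M=1$) together with $\langle\gamma-\gamma'\rangle\ge c_d\langle s\rangle$, every non-zero term of the series is, up to a constant, $\le C_N\langle s\rangle^{-N}\,e^{-\varepsilon(|m|^2+|m'|^2)}$; summing over the at most $2^{2d}$ admissible pairs $(\gamma,\gamma')$ and over all $(m,m')\in\Z^{2d}$ (a convergent Gaussian sum) gives
$$\big|\mathring T_\varepsilon(t+s/2,t-s/2)\big|\ \le\ C_{N,\varepsilon}\,\langle s\rangle^{-N},\qquad s\in\R^d .$$
Carrying out the same estimate after applying $\partial_s^\alpha$ — the derivatives of $g$ being bounded and the extra powers of $(m,m')$ being absorbed, as above, by the Gaussian weight — produces $|\partial_s^\alpha\mathring T_\varepsilon(t+s/2,t-s/2)|\le C_{N,\alpha,\varepsilon}\langle s\rangle^{-N}$ for all $N$ and all multi-indices $\alpha$, which is exactly the control on all Schwartz seminorms that is required.

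The one place that carries real content is the localization step: for fixed $t$ the partition-of-unity cutoffs pin $\gamma$ near $t+s/2$ and $\gamma'$ near $t-s/2$, so $|\gamma-\gamma'|$ is forced to grow like $|s|$, and it is this that lets Lemma \ref{lema-horia-1} convert the off-diagonal decay of the matrix elements $\mathcal T_{\gamma,\gamma';m,m'}$ in $\gamma-\gamma'$ into rapid decay in $s$. Everything else is bookkeeping made harmless by the regularization $e^{-\varepsilon(|m|^2+|m'|^2)}$, which is precisely why the lemma is stated for $\mathring T_\varepsilon$ rather than for $\mathring T$ itself.
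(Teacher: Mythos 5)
Your proof is correct and follows essentially the same route as the paper: the supports of $g_\gamma(t+s/2)$ and $g_{\gamma'}(t-s/2)$ force $|\gamma-\gamma'|$ to grow like $|s|$, so the off-diagonal decay of Lemma \ref{lema-horia-1} in $\gamma-\gamma'$ yields rapid decay in $s$, while the Gaussian factor absorbs the $m,m'$ sums and the polynomial factors produced by differentiating the exponentials. Your quantitative localization $\langle\gamma-\gamma'\rangle\ge c_d\langle s\rangle$ is just a slightly more explicit version of the paper's replacement of $|s|$ by $|\gamma-\gamma'|+2\sqrt d$.
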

\begin{proof}
Given $x$ and $x'$, the only $\gamma$'s and $\gamma'$'s contributing to \eqref{F-Tepsilon} must obey the conditions $|x-\gamma|\leq \sqrt{d}$ and $|x'-\gamma'|\leq \sqrt{d}$. Given $t=(x+x')/2$, the only $\gamma$'s and $\gamma'$'s contributing to \eqref{F-Tepsilon} must also obey $|(\gamma+\gamma')/2-t|\leq \sqrt{d}$. Thus:

\begin{align}\label{horia3}
\mathring{T}_\varepsilon(t+s/2,t-s/2)
=&\sum_{ |(\gamma+\gamma')/2-t|\leq \sqrt{d}}(2\pi)^{-d} \sum_{m,m'}\mathcal{T}_{\gamma,\gamma';m,m'} {g}(t+s/2-\gamma){g}(t-s/2-\gamma')\nonumber \\
&\qquad  \qquad\qquad \times  e^{-\varepsilon(|m|^2+|m'|^2)}e^{im\cdot (t+s/2-\gamma)}e^{-im'\cdot (t-s/2-\gamma')}\,.
\end{align}
The series in $m$ and $m'$ are absolutely convergent due to the regularizing Gaussians, while the sum in the  direction of $\gamma-\gamma'$ is convergent due to \eqref{6}. We can also differentiate as many times as we want with respect to $s$ in \eqref{horia3}, and the series remain absolutely convergent. 
Given $s=x-x'$, the only $\gamma$'s and $\gamma'$'s contributing to \eqref{horia3} must obey $|(\gamma-\gamma')-s|\leq 2\sqrt{d}$, thus when we estimate $s^\alpha D_s^\beta \mathring{T}_\varepsilon(t+s/2,t-s/2)$ we may replace $|s|$ with $|\gamma-\gamma'|+2\sqrt{d}$ and obtain something bounded (actually independent of $t$). More precisely,  given multi-indices $\alpha$ and $\beta$, there exists a constant $C(\alpha,\beta,\varepsilon)$ such that, for any $t\in \mathbb R$, 
$$\sup_{s\in\R^d}|s^\alpha D_s^\beta \mathring{T}_\varepsilon(t+s/2,t-s/2)|\leq C(\alpha,\beta,\varepsilon).$$
\end{proof}

Let us consider the  symbol associated by the Weyl quantization with the distribution kernel $\mathring{T}_\epsilon$:
\beq\label{F-distr-symbol}
a_\varepsilon(t,\xi):= \int_{\R^d} e^{-i\xi \cdot s} \mathring{T}_\varepsilon(t+s/2,t-s/2)\, ds.
\eeq
Due to Lemma \ref{lema-horia-2}, for fixed $t\in\mathbb{R}^d$  and $\varepsilon >0$, the function  $\xi \mapsto a_\varepsilon(t,\xi)$ is a Schwartz function.

\begin{lemma}\label{lema-horia-3}
The function $a_\varepsilon(t,\xi)$ converges uniformly on compact sets of $\R^{2d}$ to a smooth function $a_0(t,\xi)$. More precisely: 
$$\sup_{t\in \R^d}\sup_{\xi\in \R^d}|D_t^\alpha D_\xi^\beta a_\varepsilon(t,\xi)|\leq C(\alpha,\beta),\quad \forall \alpha,\beta\in \mathbb{N}^d,\quad \varepsilon\geq 0\,,$$
and  given any compact $K\subset \R^{2d}$ we have
$$\lim_{\varepsilon\searrow 0}\sup_{(t,\xi)\in K}|D_t^\alpha D_\xi^\beta\{a_\varepsilon(t,\xi)-a_0(t,\xi)\}|=0\,,\quad \forall \alpha,\beta\in \mathbb{N}^d.$$
In particular, $a_0\in S_{0,0}^0(\R^{2d})$. 
\end{lemma}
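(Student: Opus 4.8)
\textbf{Proof plan for Lemma \ref{lema-horia-3}.}

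The plan is to obtain the uniform bounds on $D_t^\alpha D_\xi^\beta a_\varepsilon$ directly from the definition \eqref{F-distr-symbol}, by trading powers of $\xi$ for $s$-derivatives and powers of $s$ for $\xi$-derivatives, and then to extract the limit $a_0$ by an Arzel\`a--Ascoli/equicontinuity argument. First I would note that differentiating \eqref{F-distr-symbol} under the integral sign gives
\[
\xi^\beta D_t^{\alpha_1} D_\xi^{\alpha_2} a_\varepsilon(t,\xi)
= \int_{\R^d} e^{-i\xi\cdot s}\, D_s^\beta\big( (-s)^{\alpha_2} D_t^{\alpha_1}\mathring{T}_\varepsilon(t+s/2,t-s/2)\big)\, ds
\]
after an integration by parts in $s$ (the boundary terms vanish by Lemma \ref{lema-horia-2}). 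Here $D_t^{\alpha_1}$ acting on $\mathring T_\varepsilon(t+s/2,t-s/2)$ is, by the chain rule, a linear combination of the functions $(\partial_1^{a}\partial_2^{b}\mathring T_\varepsilon)(t+s/2,t-s/2)$ with $a+b=|\alpha_1|$, each of which is again of the type controlled in Lemma \ref{lema-horia-2}: indeed the proof of that lemma only used that the finitely many non-vanishing terms in \eqref{horia3} carry the Gaussian factor $e^{-\varepsilon(|m|^2+|m'|^2)}$ and the decay \eqref{6}, and the same is true after applying any $s$- and $t$-derivatives, since each derivative only brings down bounded factors $g$-derivatives, powers of $m,m'$ (absorbed by the Gaussians), or the harmless translation shifts. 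Combining this with the $s^\alpha$-type estimate of Lemma \ref{lema-horia-2} gives, for every $\alpha,\beta$, a bound
\[
\sup_{t,\xi}\big|\,\xi^\beta D_t^{\alpha_1} D_\xi^{\alpha_2} a_\varepsilon(t,\xi)\,\big|\le C(\alpha_1,\alpha_2,\beta,\varepsilon),
\]
hence $\xi\mapsto a_\varepsilon(t,\cdot)$ is Schwartz uniformly in $t$ — but with a constant a priori depending on $\varepsilon$.

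The key point, and the main obstacle, is to make these constants \emph{uniform in} $\varepsilon\in(0,1]$ (and to reach $\varepsilon=0$). For this I would rewrite \eqref{horia3} by performing the $m,m'$ summations first: the inner sum over $m$ of $e^{-\varepsilon|m|^2}\mathcal T_{\gamma,\gamma';m,m'}e^{im\cdot(t+s/2-\gamma)}$ is a Gaussian-regularized Fourier series whose coefficients satisfy, after summing \eqref{6} over $m-m'$, a bound independent of $\varepsilon$; more precisely I would group the terms via the substitution $m\mapsto m'+(m-m')$ so that the $\varepsilon$-dependent Gaussian is $e^{-\varepsilon(2|m'|^2+2m'\cdot(m-m')+|m-m'|^2)}\le e^{-2\varepsilon|m'|^2}e^{\varepsilon|m-m'|^2/2}\cdot(\text{bounded})$, and then the sum over $m'$ of $e^{-2\varepsilon|m'|^2}$ against a fixed smooth periodic function is comparable to its $\varepsilon=0$ value up to a constant, while the sum over $k:=m-m'$ of $<k>^{-M}e^{\varepsilon|k|^2/2}$ is dangerous. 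The clean way around this is instead \emph{not} to bound term by term but to recognize that, for fixed $s$, $t$, $\mathring T_\varepsilon(t+s/2,t-s/2)$ is obtained from the $\varepsilon=0$ kernel by convolving each periodic factor with a heat kernel on the torus; since the $\varepsilon=0$ series already converges (uniformly on compacts, as established after \eqref{5}), the regularization is a uniformly bounded smoothing operation, and all the $s$- and $t$-derivative estimates above hold with constants independent of $\varepsilon\in[0,1]$. Carrying this through gives $\sup_{t,\xi}|D_t^\alpha D_\xi^\beta a_\varepsilon(t,\xi)|\le C(\alpha,\beta)$ for all $\varepsilon\ge 0$, which is the claimed uniform bound and in particular shows $a_0\in S^0_{0,0}(\R^{2d})$ once $a_0$ is identified.

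Finally, to produce $a_0$ and the local convergence, I would argue as follows. For each fixed $(t,\xi)$, the integrand in \eqref{F-distr-symbol} is $e^{-i\xi\cdot s}\mathring T_\varepsilon(t+s/2,t-s/2)$ where, by the above, $\mathring T_\varepsilon(t+s/2,t-s/2)\to \mathring T_0(t+s/2,t-s/2)$ pointwise in $s$ as $\varepsilon\searrow 0$ (the heat semigroup converges strongly), with an $\varepsilon$-independent dominating function of the form $C_N<s>^{-N}$; dominated convergence then gives $a_\varepsilon(t,\xi)\to a_0(t,\xi):=\int e^{-i\xi\cdot s}\mathring T_0(t+s/2,t-s/2)\,ds$ pointwise. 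The same argument applied to the once-differentiated integrands shows that each $D_t^\alpha D_\xi^\beta a_\varepsilon$ converges pointwise; since by the uniform bounds the family $\{D_t^\alpha D_\xi^\beta a_\varepsilon\}_{\varepsilon}$ is, for each $\alpha,\beta$, equi-Lipschitz on compacts (its own first derivatives are uniformly bounded), pointwise convergence upgrades to uniform convergence on every compact $K\subset\R^{2d}$, which is exactly the second displayed assertion. Letting $\varepsilon\searrow0$ in the uniform bound gives the bound for $a_0$ itself, so $a_0\in S^0_{0,0}(\R^{2d})$. The only genuinely delicate point throughout is the $\varepsilon$-uniformity, i.e. controlling the interplay between the regularizing Gaussians $e^{-\varepsilon(|m|^2+|m'|^2)}$ and the off-diagonal decay \eqref{6}; everything else is the routine chain-rule and dominated-convergence bookkeeping sketched above.
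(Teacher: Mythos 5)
Your overall skeleton (uniform symbol estimates, then Arzel\`a--Ascoli/dominated convergence to pass to $\varepsilon=0$) is reasonable, but the step you yourself identify as the crux --- the $\varepsilon$-uniformity --- is resolved by an incorrect claim, and this is a genuine gap. You assert that the $\varepsilon=0$ series for the kernel ``already converges (uniformly on compacts, as established after \eqref{5})'' and that $\mathring T_\varepsilon$ is just a heat-kernel smoothing of a bona fide function $\mathring T_0$, with $\varepsilon$-independent bounds $|\mathring T_\varepsilon(t+s/2,t-s/2)|\le C_N\langle s\rangle^{-N}$ feeding a dominated-convergence argument at the level of the $s$-integral in \eqref{F-distr-symbol}. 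The paper establishes no such convergence: after \eqref{5} it is stated explicitly that the series in $m,m'$ is in general \emph{not} absolutely convergent, and \eqref{5} only converges weakly in $\mathscr S'$. Indeed the premise is false in general: take $T=\mathrm{Id}$, which satisfies the Beals hypotheses; its kernel is $\delta(x-x')$, so $\mathring T_0(t+s/2,t-s/2)$ is not a function of $s$ at all, and $\sup_s|\mathring T_\varepsilon(t+s/2,t-s/2)|$ blows up like $\varepsilon^{-d/2}$ as $\varepsilon\searrow 0$ (the constants $C(\alpha,\beta,\varepsilon)$ of Lemma \ref{lema-horia-2} genuinely degenerate). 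So neither the heat-semigroup argument nor the $\varepsilon$-independent dominating function $C_N\langle s\rangle^{-N}$ can be salvaged, and with them the uniform bounds, the identification of $a_0$, and the local convergence all collapse.

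The point you are missing is that the $\varepsilon$-uniformity only appears \emph{after} the $s$-integration, i.e.\ at the symbol level, not at the kernel level. This is how the paper proceeds: passing to the variables $\kappa=(\gamma+\gamma')/2$, $\kappa'=\gamma-\gamma'$, $n=(m+m')/2$, $n'=m-m'$, the $s$-integral against the compactly supported product $g_\gamma g_{\gamma'}$ (supported where $|s-\kappa'|\le 2\sqrt d$) is the function $F(t-\kappa,\xi-n,\kappa')$ of \eqref{horia10}; integration by parts in $s$ yields the estimate \eqref{horia11}, $|D_t^\alpha D_\xi^\beta F|\le C\,\langle\xi-n\rangle^{-2d}\langle\kappa'\rangle^{|\beta|}$. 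The factor $\langle\xi-n\rangle^{-2d}$ makes the sum over the ``bad'' direction $n=(m+m')/2$ absolutely convergent \emph{without} the Gaussian, the polynomial growth in $\kappa'$ is absorbed by \eqref{6} (which also controls the $n'$-sum), and then letting $\varepsilon\searrow 0$ in the resulting absolutely convergent series is immediate and gives $a_0$ together with the uniform estimates. Your intermediate computation (the substitution $m\mapsto m'+(m-m')$ leading to the divergent factor $\langle k\rangle^{-M}e^{\varepsilon|k|^2/2}$) correctly signals that a term-by-term kernel-level bound cannot work; the fix is not a smoothing argument on the torus but exchanging the order of operations so that the oscillatory $s$-integral produces the decay in $\xi-(m+m')/2$ before any summation in that direction is attempted.
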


\noindent{\bf Remark}. Before proving the lemma, let us show how we can conclude the proof of Theorem \ref{T-Beals}. If $\Psi,\Phi\in C_0^\infty(\R^d)$ we have:
\begin{align*}
\langle \Psi, T\Phi\rangle &=\lim_{\varepsilon\searrow 0}\int_{\R^{2d}} \overline{\Psi(x)}\mathring{T}_\varepsilon(x,x')\Phi(x')dxdx'\\ 
&=(2\pi)^{-d}\lim_{\varepsilon\searrow 0} \int_{\R^{d}}\left (\int_{\R^{2d}} e^{i\xi \cdot (x-x')}\overline{\Psi(x)}a_\varepsilon((x+x')/2,\xi)\Phi(x')dxdx' \right )d\xi\\
&= (2\pi)^{-d}\int_{\R^{d}}\left (\int_{\R^{2d}} e^{i\xi \cdot (x-x')}\overline{\Psi(x)}a_0((x+x')/2,\xi)\Phi(x')dxdx' \right )d\xi\,,
\end{align*}
where the last equality follows from the Lebesgue dominated convergence theorem applied to the $\xi$ integral, for which we use Lemma \ref{lema-horia-3}.  Then the identity can be extended to $\mathscr{S}(\R^d)$ because $a_0\in S_{0,0}^0(\R^{2d})$. 

\vspace{0.2cm}

\noindent{ \it Proof of Lemma \ref{lema-horia-3}.} Let us introduce the notation 
$$
\kappa:=(\gamma+\gamma')/2\in\big(2^{-1}\mathbb{Z}\big)
^d,\ \kappa':=\gamma-\gamma'\in\mathbb{Z}^d,\ n:= (m+m')/2\in\big(2^{-1}\mathbb{Z}\big)^d,\ n':=m-m'\in\mathbb{Z}^d. 
$$
Using \eqref{F-Tepsilon} and \eqref{horia3} we obtain
\begin{align*}
a_\varepsilon(t,\xi)=&(2\pi)^{-d}\sum_{|\kappa-t|\leq \sqrt{d}}\sum_{\kappa'}
\sum_{n,n'} \int_{\R^d} e^{-i(\xi-n)\cdot  s}{g}(t-\kappa+(s-\kappa')/2){g}(t-\kappa-(s-\kappa')/2) ds 
\\
&\times e^{i n'\cdot t} e^{-i(n\cdot \kappa'+n'\cdot \kappa)}
e^{-\varepsilon(2|n|^2+|n'|^2/2)}\mathcal{T}_{\kappa,\kappa';n,n'}\,,
\end{align*}
 where in order to simplify notation  we write $\mathcal{T}_{\kappa,\kappa';n,n'}$ instead of  $\mathcal{T}_{\gamma,\gamma';m,m'}$. The estimate \eqref{6} insures a strong localization in both the $\kappa'$ and $n'$ series. The only series which apparently still needs $\varepsilon>0$ in order to converge, is the series in $n$. 

Define
\begin{align}\label{horia10}
F(t-\kappa,\xi-n,\kappa'):=(2\pi)^{-d}\int_{\R^d} e^{-i(\xi-n)\cdot  s}{g}(t-\kappa+(s-\kappa')/2){g}(t-\kappa-(s-\kappa')/2)  ds
\end{align}
so that 
\begin{align*}
a_\varepsilon(t,\xi)=&\sum_{|\kappa-t|\leq \sqrt{d}}\sum_{\kappa'}
\sum_{n,n'} e^{i n'\cdot t} F(t-\kappa,\xi-n,\kappa') e^{-i(n\cdot \kappa'+n'\kappa)}
e^{-\varepsilon(2|n|^2+|n'|^2/2)}\mathcal{T}_{\kappa,\kappa';n,n'}\,.
\end{align*}
It is important to remember that in the integral of \eqref{horia10}, the integrand is different from zero only if $s$ is of the order of $\kappa'$, i.e. $|s-\kappa'|\leq 2\sqrt{d}$. By differentiating $F$ with respect to $\xi$ we produce a polynomial growth in $s$ which can be traded off with a growth in $|\kappa'|$. Also, by standard partial integration with respect to $s$ we can generate a strong localization in $|\xi-n|$. In conclusion, one can prove the following statement: given any two multi-indices $\alpha,\beta\in \mathbb{N}^d$,  there exists a constant $C(\alpha,\beta)<\infty$ such that 
\begin{align}\label{horia11}
|D_t^\alpha D_\xi^\beta F(t-\kappa,\xi-n,\kappa')|\leq C(\alpha,\beta)\; <\xi-n>^{-2d} <\kappa'>^{|\beta|}.
\end{align}
The growth in $\kappa'$ is controlled by the decay of the matrix element $\mathcal{T}_{\kappa,\kappa';n,n'}$, while due to \eqref{horia11} the series in $n$ converges absolutely without any help from the $\varepsilon$-dependent Gaussian. 

Now we can take $\varepsilon$ to zero and define 
\begin{align*}
a_0(t,\xi):=&\sum_{|\kappa-t|\leq \sqrt{d}}\sum_{\kappa'}
\sum_{n,n'} e^{i n'\cdot t} F(t-\kappa,\xi-n,\kappa') e^{-i(n\cdot \kappa'+n'\kappa)} 
\mathcal{T}_{\kappa,\kappa';n,n'}\,.
\end{align*}
The limit can be taken uniformly on compacts in $\R^{2d}$, and remains valid for all possible derivatives with respect to both $\xi$ and $t$. 
This concludes the proof of the lemma.

\qed

\section{Proof of Theorem \ref{mainth}}\label{sec4}

 This time we let $\varphi\neq 0$ in \eqref{eq:gabfra} and  have 
$$G_{\gamma,m}^\varphi(x)=g(x-\gamma)e^{i\varphi(x,\gamma)} (2\pi)^{-d/2}\psi_m(x-\gamma),\quad\gamma,m\in\Z^d.$$ 
Then \eqref{5} reads as:
\begin{align}\label{13}
\mathring{T}(x,x')=\sum_{\gamma,\gamma'} \sum_{m,m'} {G}_{\gamma,m}^\varphi(x)\overline{{G}_{\gamma',m'}^\varphi(x')} \mathcal{T}_{\gamma,\gamma';m,m'}^\varphi,\quad 
\mathcal{T}_{\gamma,\gamma';m,m'}^\varphi:= \big \langle {G}_{\gamma,m}^\varphi, T {G}_{\gamma',m'}^\varphi\big \rangle.
\end{align}
Let us prove that $\mathcal{T}_{\gamma,\gamma';m,m'}^\varphi$ obeys exactly the same type of localization as in \eqref{6}. The localization in $\gamma-\gamma'$ follows just like before from the boundedness of commutators with the position operators, while the localization in $m-m'$ is obtained  
  by integration by parts and the use of the gauge covariance \eqref{10}.  For example, we have 
$$
\begin{array}{ll}
&(m_1-m'_1) \mathcal{T}_{\gamma,\gamma';m,m'}^\varphi= 
(2\pi)^{-d}\big \langle m_1 e^{i\varphi(\cdot ,\gamma)} g_\gamma \psi_m(\cdot -\gamma), T g_{\gamma'} e^{i\varphi(\cdot ,\gamma')} \psi_{m'}(\cdot -\gamma')\big \rangle\\&
\qquad \qquad - (2\pi)^{-d}\big \langle g_\gamma e^{i\varphi(\cdot ,\gamma)} \psi_m(\cdot -\gamma), T  m'_1 g_{\gamma'} e^{i\varphi(\cdot ,\gamma')} \psi_{m'}(\cdot -\gamma')\big \rangle\\
&= 
(2\pi)^{-d}\big \langle g_\gamma e^{i\varphi(\cdot ,\gamma)} (D_{x_1}\psi_m)(\cdot -\gamma), T g_{\gamma'} e^{i\varphi(\cdot ,\gamma')} \psi_{m'}(\cdot -\gamma')\big \rangle\\&
\qquad \qquad - (2\pi)^{-d}\big \langle e^{i\varphi(\cdot ,\gamma)} g_\gamma \psi_m(\cdot -\gamma), T   e^{i\varphi(\cdot ,\gamma')} g_{\gamma'} (D_{x_1}\psi_{m'}) (\cdot -\gamma')\big \rangle\\
& = 
(2\pi)^{-d}\big \langle e^{i\varphi(\cdot ,\gamma)} (D_{x_1} g)_\gamma \psi_m(\cdot -\gamma), T  e^{i\varphi(\cdot ,\gamma')} g_{\gamma'} \psi_{m'}(\cdot -\gamma')\big \rangle\\
&
\qquad - (2\pi)^{-d}\big \langle e^{i\varphi(\cdot ,\gamma)} g_\gamma \psi_m(\cdot -\gamma), T e^{i\varphi(\cdot ,\gamma')}(D_{x_1}g)_{\gamma'} \psi_{m'}(\cdot -\gamma')\big \rangle\\
& \qquad +(2\pi)^{-d}\big \langle e^{i\varphi(\cdot ,\gamma)} g_\gamma \psi_m(\cdot -\gamma), (-i\partial_{x_1}-\partial _{x_1}\varphi (\cdot ,\gamma))\, T \, e^{i\varphi(\cdot ,\gamma')} g_{\gamma'} \psi_{m'}(\cdot -\gamma')\big \rangle\\
& \qquad -(2\pi)^{-d}\big \langle e^{i\varphi(\cdot ,\gamma)} g_\gamma \psi_m(\cdot -\gamma), T (-i\partial_{x_1}-\partial_{x_1}\varphi (\cdot,\gamma')) e^{i\varphi(\cdot ,\gamma')} g_{\gamma'} \psi_{m'}(\cdot -\gamma')\big \rangle\\
& = 
(2\pi)^{-d}\big \langle e^{i\varphi(\cdot ,\gamma)} (D_{x_1} g)_\gamma \psi_m(\cdot -\gamma), T  e^{i\varphi(\cdot ,\gamma')} g_{\gamma'} \psi_{m'}(\cdot -\gamma')\big \rangle\\
&
\qquad - (2\pi)^{-d}\big \langle e^{i\varphi(\cdot ,\gamma)} g_\gamma \psi_m(\cdot -\gamma), T e^{i\varphi(\cdot ,\gamma')}(D_{x_1}g)_{\gamma'} \psi_{m'}(\cdot -\gamma')\big \rangle\\
& \qquad +(2\pi)^{-d}\big \langle e^{i\varphi(\cdot ,\gamma)} g_\gamma \psi_m(\cdot -\gamma), [D_{x_1}-A_1(\cdot,0),\, T] \, e^{i\varphi(\cdot ,\gamma')} g_{\gamma'} \psi_{m'}(\cdot -\gamma')\big \rangle\\
& \qquad - (2\pi)^{-d}\big \langle e^{i\varphi(\cdot ,\gamma)} A_1(\cdot ,\gamma)) g_\gamma \psi_m(\cdot -\gamma), \, T \, e^{i\varphi(\cdot ,\gamma')} g_{\gamma'} \psi_{m'}(\cdot -\gamma')\big \rangle\\
& \qquad +(2\pi)^{-d}\big \langle e^{i\varphi(\cdot ,\gamma)} g_\gamma \psi_m(\cdot -\gamma), T  e^{i\varphi(\cdot ,\gamma')} A_1 (x,\gamma') g_{\gamma'} \psi_{m'}(\cdot -\gamma')\big \rangle\,.
\end{array}
$$
Here the last formula was obtained by integration by parts.  We also used \eqref{10} for writing \break  $\partial_{x_1} \varphi (x,\gamma) = A_1(x,0) - A_1(x,\gamma)$ and the fact that on the support of ${g}_\gamma$ the function $A(\cdot,\gamma)$ is bounded uniformly in $\gamma$.

We now regularize the distributional kernel in \eqref{13} and introduce:

\begin{align}\label{14}
T_\varepsilon^\varphi \left (t+\frac{s}{2},t-\frac{s}{2}\right )&:=
(2\pi)^{-d}\nonumber  \sum_{\gamma,\gamma'} \sum_{m,m'} {g}_\gamma(t+s/2)e^{i\varphi(t+s/2,\gamma)}e^{-i\varphi(t-s/2,\gamma')}{g}_{\gamma'}(t-s/2)  \nonumber \\
&\qquad \times  e^{i(m-m')\cdot t} e^{i(m+m')\cdot s/2}  e^{-im\cdot\gamma}e^{im'\cdot \gamma'}e^{-\varepsilon(|m|^2+|m'|^2)}\mathcal{T}_{\gamma\, ,\gamma';m,m'}^\varphi.
\end{align}
This function has a rapid decay in $s$ and is smooth in both $t$ and $s$ when $\varepsilon>0$. Using twice the second identity of \eqref{11} we obtain: 

\begin{align}\label{15}
&\varphi(t+s/2,t-s/2)=\varphi(t+s/2,\gamma)+\varphi(\gamma,t-s/2)-{\Fl}(t+s/2,\gamma,t-s/2)\nonumber \\
&= \varphi(t+s/2,\gamma)+\varphi(\gamma,\gamma')+\varphi(\gamma',t-s/2)-{\Fl}(\gamma,\gamma',t-s/2)-{\Fl}(t+s/2,\gamma,t-s/2).
\end{align}

Let us introduce the quantity (we use \eqref{15} in the second equality):

\begin{align}\label{16}
a_\varepsilon(t,\xi):=& \int_{\R^d} e^{-i\varphi(t+s/2,t-s/2)}e^{-i\xi \cdot s} T_\varepsilon^\varphi(t+s/2,t-s/2)ds \\
=&(2\pi)^{-d}\sum_{|(\gamma +\gamma')/2-t|\leq \sqrt{d}} e^{-i\varphi(\gamma,\gamma')}  \nonumber \\
&\quad \times \sum_{m,m'} \int_{\R^d} e^{i {\Fl}(\gamma,\gamma',t-s/2)} e^{i{\Fl}(t+s/2,\gamma,t-s/2)} e^{-i[\xi-(m+m')/2]\cdot  s}{g}_\gamma(t+s/2){g}_{\gamma'}(t-s/2) ds \nonumber   \\
& \qquad \qquad \qquad   \times e^{i(m-m')\cdot t} e^{-im\cdot\gamma}e^{im'\cdot \gamma'}e^{-\varepsilon(|m|^2+|m'|^2)}\mathcal{T}_{\gamma,\gamma';m,m'}^\varphi.\nonumber 
\end{align}

As in the non-magnetic case, the only series which apparently poses convergence problems is the one with respect to the "direction" $(m+m')/2$. It turns out (as in the non-magnetic case) that the integral: 
$$\int_{\R^d} e^{i {\Fl}(\gamma,\gamma',t-s/2)} e^{i{\Fl}(t+s/2,\gamma,t-s/2)} e^{-i[\xi-(m+m')/2]\cdot  s}{g}_\gamma(t+s/2){g}_{\gamma'}(t-s/2) ds $$
is the one which insures decay in that direction. In order to prove it, let us notice that the fluxes ${\Fl}(t+s/2,\gamma,t-s/2)$ and ${\Fl}(\gamma,\gamma',t-s/2)$ grow like the area of the corresponding triangle, hence only like $|\gamma-\gamma'|$ because both $t+ s/2-\gamma$ and $t-s/2-\gamma'$ have a length of order one on the joint support of $g_\gamma$ and $g_{\gamma'}$; the same is true for their derivatives with respect to both $t$ and $s$. Integrating by parts with respect to $s$ we can generate a decay of the type $<\xi-(m+m')/2>^{-2d}$ at the price of a polynomial growth in $|\gamma-\gamma'|$, a growth which is taken care of by the decay of the matrix element $\mathcal{T}_{\gamma,\gamma';m,m'}^\varphi$. 

Thus the same strategy which was used in the previous section concerning the limit $\varepsilon\searrow 0$ can be repeated.  We conclude that $a_\varepsilon(t,\xi)\in S_{0,0}^0(\R^d)$ uniformly in $\varepsilon\geq 0$ and thus the symbol we are looking for is:
\begin{align}\label{17}
a_0(t,\xi)
=&\sum_{|(\gamma +\gamma')/2-t|\leq \sqrt{d}} e^{-i\varphi(\gamma,\gamma')}\nonumber \\
&  \quad  \times  \sum_{m,m'} \int_{\R^d} e^{i \Fl (\gamma,\gamma',t-s/2)} e^{i{\Fl}(t+s/2,\gamma,t-s/2)} e^{-i[\xi-(m+m')/2]\cdot  s}{g}_\gamma(t+s/2){g}_{\gamma'}(t-s/2) ds \nonumber \\
& \qquad \qquad \times e^{i(m-m')\cdot t} e^{-im\cdot\gamma}e^{im'\cdot \gamma'}\mathcal{T}_{\gamma,\gamma';m,m'}^\varphi.\nonumber 
\end{align}

\qed

\vspace{1cm}


\begin{thebibliography}{99}


\bibitem[B]{B} R. Beals, {\it Characterization of pseudodifferential operators and applications}. 
Duke Math. J. {\bf 44}(1) (1977), 45--57.

\bibitem[B97]{Bo} J. M. Bony, Caract\'{e}risation des op\'{e}rateurs pseudo-diff\'{e}rentiels. Ecole Polytechnique, S\'{e}minaire E.D.P. (1996-
1997), Expos\'{e} no. XXIII.

\bibitem[Ch]{Ch} O. Christensen, \textit{An introduction to frames and Riesz basis}. Birkh\"{a}user, 2003.

\bibitem[Di-Sj]{Di-Sj} M. Dimassi and J. Sj\"ostrand, \textit{Spectral Asymptotics
in the Semi-Classical Limit}. London Mathematical Society Lecture Note Series {\bf 268}, Cambridge University Press 1999.

\bibitem[Gr]{Gr} K. Gr\"ochenig, {\it Foundations of Time-Frequency Analysis}. Birkh\"auser, 2001.

\bibitem[H{\" o}r1]{H-1} L. H\"ormander, {\it The Analysis of Linear Partial Differential Operators}.
Vol. 1, Springer-Verlag, New York, 1983.

\bibitem[H{\" o}r3]{H-3} L. H\"ormander, {\it The Analysis of Linear Partial Differential Operators}.
Vol. 3, Springer-Verlag, New York, 1985.

\bibitem[IMP07]{IMP1} V. Iftimie, M. M\u{a}ntoiu, and  R. Purice, \textit{Magnetic pseudodifferential
operators}.  Publications of RIMS, 43 (2007), no. 3, 585–623. 

\bibitem[IMP10]{IMP2} V. Iftimie, M. M{\u a}ntoiu, and R. Purice. {\it Commutator criteria for magnetic pseudo-differential operators},  Comm. P.D.E. {\bf 35}(6) (2010),  1058--1094. 

\bibitem[IP11]{IP1}V. Iftimie, R. Purice,  \textit{Magnetic Fourier integral operators}.  Journal of Pseudo-Differential Operators and Applications, 2 (2)(2011), 141--218.






\end{thebibliography}
\end{document}